\documentclass[a4paper,11pt]{article}
\usepackage{amsfonts}
\usepackage{amsmath}
\usepackage{amssymb}
\usepackage{amsthm}
\numberwithin{equation}{section}         
\usepackage{indentfirst}                
\usepackage{verbatim}                    
\usepackage{mathrsfs}
\usepackage{graphicx}


\theoremstyle{plain} \newtheorem{Lem}{Lemma}[section]       
\theoremstyle{plain}\newtheorem{theorem}{Theorem}[section]  
\theoremstyle{plain}    
\theoremstyle{remark} \newtheorem{Rem}{Remarks}[section]   
\theoremstyle{plain}\newtheorem{Pro}{Proposition}[section]  
\theoremstyle{plain}  
\allowdisplaybreaks   

\newcommand{\be}{\begin{equation}}
\newcommand{\ee}{\end{equation}}
\newcommand{\bpm}{\begin{pmatrix}}
\newcommand{\epm}{\end{pmatrix}}

\begin{document}


\begin{center}
\LARGE{Numerical Solutions of Jump Diffusions with Markovian Switching}\\
\end{center}
\begin{center}
Jun Ye, Kai Li\\

\end{center}
\begin{center}
\emph{Department of Mathematical Sciences, Tsinghua University,
Beijing 100084,
P.R.China\\
E-mail: jye@math.tsinghua.edu.cn}\\
[.5cm]
\end{center}

\def\abstractname{Abstract}
\begin{abstract}
In this paper we consider the numerical solutions for a class of
jump diffusions with Markovian switching. After briefly reviewing
necessary notions, a new jump-adapted efficient algorithm based on
the Euler scheme is constructed for approximating the exact
solution. Under some general conditions, it is proved that the
numerical solution through such scheme converge to the exact
solution. Moreover, the order of the error between the numerical
solution and the exact solution is also derived. Numerical
experiments are carried out to
show the computational efficiency of the approximation.\\
\textbf{Keywords} Jump diffusion, Markovian switching , Numerical
solutions, Poisson measure\\
\textbf{MR(2000) Subject Classification} 65C20, 60H10\\
\end{abstract}

\section{Introduction}
Stochastic hybrid systems arise in numerous applications of systems
with frequent unpredictable structural changes, e.g. flexible
manufacturing systems, air traffic management, electric power
networks, mathematical finance and risk management etc. In typical
hybrid system, the state space consists of two parts: one component
takes values in Euclidean space while the other takes discrete
values. For various applications of stochastic hybrid systems, we
refer to \cite{3,5,4,2}.

One important classes of stochastic hybrid systems is the
following jump diffusion with Markovian switching:
\begin{equation}\label{eq:jm}
\begin{split}
y(t)=&y_0+\int_{0}^{t}b(y(s),r(s))ds+\int_{0}^{t}\sigma(y(s),r(s))dW(s)\\
&\quad+\int_{0}^{t}\int_{\Gamma}g(y(s^-),r(s),v)N(ds,dv),\\
\end{split}
\end{equation}
where $r(t)$ is a continuous-time Markov chain represents the
environment or modes. Such processes have been increasingly used in
modelling of the stochastic systems which are affected by randomly
occuring impulses as well as frequent regime switching usually
modelled by Markov chain. A typical application of (\ref{eq:jm})
stems from risk theory in insurance and finance where $y(t)$ is
regarded as a surplus process in Markov-modulated market.
Applications have also been reported in a wide range of fields such
as option pricing \cite{16,15} and flexible manufacturing systems
\cite{17}.

Since in many problems such L\'{e}vy stochastic differential
equations with Markovian switching cannot be solved explicitly, it
is important, from theoretical point of view, and even more for the
sake of various applications, to find their approximate solutions in
an explicit form or in a form suitable for application of some
numerical methods. Although the numerical solutions of stochastic
differential equations have been the focus of enormous research (see
e.g. \cite{25,26,6,7,8,9}), there has been significantly less work
on the numerical methods for diffusion processes or jump diffusion
processes with Markovian switching. In the literatures addressing
these problems, Yuan and Mao \cite{10} firstly considered the
numerical solutions of the following stochastic differential
equations with Markovian switching
\begin{equation}\label{eq:m}
\begin{split}
&dy(t)=f(y(t),r(t))dt+g(y(t),r(t))dW(t),\\
&P\{r(t+\delta)=j|r(t)=i\}=q_{ij}\delta+o(\delta)\quad i\neq j.
\end{split}
\end{equation}
It was proved that the numerical solutions of (\ref{eq:m}) under
Euler method converge to the exact solutions, and the order of error
was also estimated. Krystul and Bagchi \cite{11} extended
(\ref{eq:m}) to a more general switching diffusion processes with
state-dependent switching rates, and a approximation scheme for
first passage time of the processes was derived. Recently, Yin, Song
and Zhang \cite{19} proposed a numerical algorithm for
(\ref{eq:jm}), and proved that the algorithm converges to the
desired limit by means of a martingale problem formulation.

In this paper, we try to develop a jump-adapted numerical algorithm
for (\ref{eq:jm}), which is based on the Euler scheme. Different
from \cite{19}, the algorithm we proposed does not rely on a
constant-step-size scheme, instead, a series of jump-times driven by
a Poisson random measure are taken into account. Another difference
is that convergence properties in \cite{19} was proved in weak sense
while we prove it in strong sense. Meanwhile this paper focus on the
order of convergence, which is enlightened by the methods in
\cite{10}.

The rest of the paper is organized as follows. Section 2 briefly
reviews the existence and uniqueness of equation (\ref{eq:jm}) and
describe the modified Euler algorithm of numerical solutions.
Section 3 shows that the numerical solution under such algorithm
converge to the exact solution in $L^2$, and the order of the
error between the numerical and exact solution is derived. Section
4 gives two numerical examples to examine the performance of the
algorithm described in Section 3.

\section{Preliminary and algorithm}
Let $(\Omega,\mathcal{F},\{\mathcal{F}_t\}_{t\geq0},P)$ be a
complete probability space with a filtration
$\{\mathcal{F}_t\}_{t\geq0}$ satisfying the usual conditions.
Suppose that there is a finite set $S=\{1,2,...,N\}$, representing
the possible regimes of the environment. Let $\Gamma$ be a compact
subset of  $\mathbb{R}$ not including the origin, and
$b(\cdot,\cdot):\mathbb{R}^d \times S \rightarrow \mathbb{R}^d$,
$\sigma(\cdot,\cdot):\mathbb{R}^d \times S \rightarrow
\mathbb{R}^{d\times d}$, $g(\cdot,\cdot,\cdot):\mathbb{R}^d \times
S \times \Gamma\rightarrow \mathbb{R}^d$. Consider the following
jump diffusion with Markovian switching of the form
\begin{equation}\label{eq:jm1}
\begin{split}
y(t)=&y_0+\int_{0}^{t}b(y(s),r(s))ds+\int_{0}^{t}\sigma(y(s),r(s))dW(s)\\
&\quad+\int_{0}^{t}\int_{\Gamma}g(y(s^-),r(s),v)N(ds,dv),\\
\end{split}
\end{equation}
with initial value $y(0)=y_0\in\mathbb{R}^d$ and $r(0)=i_0\in S$,
where $W(t)=(W^{1}(t),\cdots,W^{d}(t))^T$ is a $d$-dimensional
$\mathcal{F}_t$-adapted standard Brownian motion, $N(dt,dv)$ is a
$\mathcal{F}_t$-Poisson point process on
$\mathbb{R}^{+}\times\Gamma$ with deterministic characteristic
measure $\Pi(dv)$ on a compact set $\Gamma\subset\mathbb{R}$, let
$\widetilde{N}(dt,dv)=N(dt,dv)-\Pi(dv)dt$ be the compensated Poisson
random measure on $\mathbb{R}^{+}\times\Gamma$, and $r(t)$ is a
continuous-time Markov chain taking value in a finite state space
$S$ with the generator $Q=(q_{ij})_{N\times N}$ given by
\begin{equation*}
P\{r(t+\delta)=j|r(t)=i\}=\begin{cases}
q_{ij}\delta+o(\delta), &\text{if $i\neq j$},\\[.5cm]
1+q_{ii}\delta+o(\delta), &\text{if $i=j$},
\end{cases}
\end{equation*}
provided $\delta\downarrow 0$, and
\[-q_{ii}=\sum_{i\neq j}q_{ij}<+\infty.\]

 We assume $W(t),N(t,\cdot)$ and $r(t)$ are independent. Denote
$N_t\triangleq N(t,\Gamma)$ by a homogeneous Poisson process with
intensity $\lambda=\Pi(\Gamma)$. It is well known that the
compensated measure of $N(dt,dv)$ is $\Pi(dv)dt$, hence the centered
Poisson measure is $\widetilde{N}(dt,dv)$ which is a martingale on
$(\Omega,\mathcal{F},\{\mathcal{F}_t\}_{t\geq0},P)$ (see \cite{23}).
\begin{Rem}
when $d=1$, we assume that the characteristic measure
$\Pi(dv)=\lambda F(dv)$ where $F(dv)$ is a probability distribution
on $\Gamma\subseteq\mathbb{R}^{+}$ and $\lambda$ is a positive
constant. In this case, the integral with respect to the random
measure $N(dt,dv)$ is simply a compound Poisson process. We have
$\int_0^t\int_{\Gamma}vN(dt,dv)=\sum_{i=1}^{N_t}\xi_i$, where
$\{N_t,t\geq 0\}$ is a process with intensity $\lambda$ and
$\{\xi_i,i\in\mathbb{N}\}$ is a sequence of i.i.d. random variables
with common distribution $F$.
\end{Rem}

\subsection{Existence and uniqueness}
Under certain conditions we can establish the existence of a
pathwise unique solution of (\ref{eq:jm1}). Here we make the following global
Lipschitz and linear growth assumptions:\\[.2cm]
($\mathcal{H}$1) For all $\ (i,x,y,v)\in
S\times\mathbb{R}^d\times\mathbb{R}^d\times\Gamma$, there exists a
constant $L_1>0$ such that
\[|b(x,i)-b(y,i)|^2+|\sigma(x,i)-\sigma(y,i)|^2+\int_{\Gamma}
|g(x,i,v)-g(y,i,v)|^2\Pi(dv)\leq L_1|x-y|^2.\]\\
($\mathcal{H}$2) For all $\ (i,x,v)\in
S\times\mathbb{R}^d\times\Gamma$, there exists a constant $L_2>0$
such that
\[\int_{\Gamma}
|g(x,i,v)|^2\Pi(dv)\leq L_2(1+|x|^2).\]\\
($\mathcal{H}$3) Let $K$ be the support of $g(\cdot,\cdot,\cdot)$
and $U$ be the projection of $K$ on $\Gamma\subset\mathbb{R}$, then
assume that $\Pi(U)<\infty$.

 We denote
by $|\cdot|$ the Euclidean norm for vectors or the trace
 norm for matrices.

\begin{Rem}It's easy to show that if $b(\cdot,\cdot),\
\sigma(\cdot,\cdot)$ satisfy global Lipschitz condition, then they
also satisfy linear growth condition, i.e. for all
$(x,i)\in\mathbb{R}^d\times S$, there exists a constant $L_3$ such
that
\[|b(x,i)|^2+|\sigma(x,i)|^2\leq L_3(1+|x|^2).\]
To show this, just let
$L_3=2L_1\vee\max(2|b(0,i)|^2+2|\sigma(0,i)|^2:i\in S)$. For
convenience, we will set $L=L_1\vee L_2\vee L_3$ in the following
of this paper.
\end{Rem}

\begin{theorem}
If $b(x,i)$, $\sigma(x,i)$ and $g(x,i,v)$ satisfy the conditions
($\mathcal{H}$1), ($\mathcal{H}$2), ($\mathcal{H}$3), and suppose
$W(t),r(t),N(dt,dv)$ be independent. Then there exists a unique
d-dimensional $\mathcal{F}_t$-adapted right-continuous process
$y(t)$ with left-hand limits which satisfies equation (2.1) and
such that $y(0)=y_0$ and $r(0)=i_0$ a.s.
\end{theorem}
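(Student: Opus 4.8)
The plan is to follow the classical Picard--Lindel\"of successive approximation argument, adapted to accommodate the jump integral and the Markovian switching. The essential simplification for the switching component is that the state space $S=\{1,2,\dots,N\}$ is finite and, by ($\mathcal{H}$1), ($\mathcal{H}$2) together with the Remark, the Lipschitz and linear growth constants can be taken uniform over $i\in S$ (namely $L$). Hence I would treat a fixed sample path of $r(s)$ as a given right-continuous $\mathcal{F}_t$-adapted input and carry it unchanged through every estimate; because all bounding constants are independent of the mode, the switching never enters the inequalities except through the argument $r(s)$.

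First I would set up the iteration $y^{(0)}(t)\equiv y_0$ and, for $n\ge 0$,
\be
y^{(n+1)}(t)=y_0+\int_0^t b(y^{(n)}(s),r(s))\,ds+\int_0^t\sigma(y^{(n)}(s),r(s))\,dW(s)+\int_0^t\int_\Gamma g(y^{(n)}(s^-),r(s),v)\,N(ds,dv).
\ee
The first quantitative step is a uniform $L^2$ moment bound $\sup_{n}E\sup_{0\le t\le T}|y^{(n)}(t)|^2<\infty$, established by induction (each iterate is square-integrable since $y_0$ is constant and the coefficients grow at most linearly). To obtain it I would split the jump term via $N(ds,dv)=\widetilde{N}(ds,dv)+\Pi(dv)\,ds$, so that the compensated part is a square-integrable martingale to which Doob's $L^2$ inequality and the jump isometry $E|\int_0^t\int_\Gamma g\,\widetilde{N}|^2=E\int_0^t\int_\Gamma|g|^2\,\Pi(dv)\,ds$ apply, while the compensator part, the drift, and the diffusion are controlled by the Cauchy--Schwarz inequality, It\^o's isometry, and the linear growth bounds from ($\mathcal{H}$2) and the Remark. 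Assembling these and invoking Gronwall's inequality yields the claimed bound.

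Next I would show $\{y^{(n)}\}$ is Cauchy in $\big(E\sup_{0\le t\le T}|\cdot|^2\big)^{1/2}$. Estimating $E\sup_{0\le s\le t}|y^{(n+1)}(s)-y^{(n)}(s)|^2$ by the same three-term decomposition, but now invoking the Lipschitz condition ($\mathcal{H}$1) in place of linear growth, produces $u_{n+1}(t)\le C\int_0^t u_n(s)\,ds$ for $u_n(t)=E\sup_{0\le s\le t}|y^{(n)}(s)-y^{(n-1)}(s)|^2$. Iterating gives $u_n(t)\le (Ct)^n/n!$ up to a constant, which is summable, so the sequence converges in $L^2$ to a limit $y(t)$; a subsequence converges uniformly on $[0,T]$ almost surely, securing right-continuity with left limits. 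Passing to the limit in the iteration---using the Lipschitz continuity of the coefficients to replace $y^{(n)}$ by $y$ inside each integral---shows $y(t)$ solves (2.1).

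Finally, for uniqueness I would take two solutions $y,\bar y$ with the same initial data and driving processes, form $\phi(t)=E\sup_{0\le s\le t}|y(s)-\bar y(s)|^2$, apply the identical decomposition and ($\mathcal{H}$1) to get $\phi(t)\le C\int_0^t\phi(s)\,ds$, and conclude $\phi\equiv 0$ by Gronwall. The main obstacle throughout is the correct treatment of the jump integral: because (2.1) is written against the uncompensated measure $N$, every moment estimate must first isolate the martingale part $\widetilde N$---to which the isometry and Doob's inequality apply---from the bounded-variation compensator part, and ($\mathcal{H}$3) is what guarantees these integrals are finite on the support of $g$. The switching, by contrast, is harmless once the constants are taken uniform over the finite set $S$.
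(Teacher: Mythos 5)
Your proof is correct, but it is worth noting that the paper does not actually prove this theorem at all: its ``proof'' consists of a citation to Blom (2003) and Ghosh--Bagchi (2005) on stochastic hybrid processes. You instead supply a self-contained classical Picard iteration, and the argument goes through: the decisive observation is the one you make at the outset, namely that the switching here is exogenous (state-independent rates, $r$ independent of $W$ and $N$) with a finite state space, so the Lipschitz and linear growth constants are uniform over $i\in S$ and $r(s)$ can be carried through every estimate as an inert argument. Your treatment of the jump term --- splitting $N=\widetilde N+\Pi(dv)\,ds$, applying Doob and the isometry to the martingale part and Cauchy--Schwarz to the compensator, which is controlled because $\Pi(\Gamma)<\infty$ --- is exactly the decomposition the paper itself uses later in Lemma 3.1 for the a priori moment bound, so your route is consistent with the paper's own toolkit. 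What the cited references buy that your argument does not is generality: they handle genuinely hybrid jumps and state-dependent switching, where $r$ cannot be decoupled from $y$ and the plain Picard scheme needs modification; what your argument buys is a transparent, elementary proof adequate for the model actually considered. Two minor points you should make explicit in a polished write-up: in the uniqueness step one needs $\phi(t)<\infty$ before Gronwall applies, which requires either the a priori $L^2$ bound for an arbitrary solution or a localization by stopping times; and the integrand $g(y(s^-),r(s),v)$ should be replaced by its predictable version $g(y(s^-),r(s^-),v)$, which changes nothing since the jump times of $r$ and of $N$ are a.s.\ disjoint by independence.
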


\begin{proof}
See \cite{13,12}。
\end{proof}

\subsection{Algorithm} Now we turn our attention to numerical
algorithm. Given $\Delta>0$ as a step size, denote
$\{t'_i\}_{i\geq1}$ the usual equidistant time discretization of a
bounded interval $[0,T]$, i.e. $t'_0=0$,
 $t'_i-t'_{i-1}=\Delta$, if $t'_{n-1}<T\leq t'_n$ then set $t'_n=T$.
Suppose $\tau'_j=inf\{t\geq 0,N_t\geq j\}, j=1,2,\cdots$ are the
jump times of $N_t\triangleq N(t,\Gamma)$. Apparently,
$\tau_j=\tau'_j\wedge T,j=1,2,\cdots$ are also stopping times. Now
we take a new time discretization
$\{t_k\}_{k\geq1}=\{t'_i\}_{i\geq1}\cup\{\tau_j\}_{j\geq1}$
satisfying $t_{k+1}-t_{k}\leq\Delta$ for all $k\geq1$. This type of
grid using the jump-times as additional grid points is usually
refered to as a jump-adapted method, see for example \cite{25}.

Since $N(dt,dv)$ and $r(t)$ are independent,  for given partition
$\{t_k\}_{k\geq 1}$, $\{r(t_k),k=1,2,\cdots\}$ is a discrete
Markov chain with transition probability matrix $(P(i,j))_{N\times
N}$, here $P(i,j)= P(r(t_{k+1})=j|r(t_{k})=i)$ is the $ij$th entry
of the matrix $e^{(t_{k+1}-t_{k})Q}$, thus we could use following
recursion procedure to simulate the discrete Markov chain
$\{r(t_k)\}_{k\geq1}$, suppose $r(t_k)=i_1$ and generate a random
number $\xi$ which is uniformly distributed in $[0,1]$, then we
define
\begin{equation*} r(t_{k+1})=\begin{cases}
i_2, &\text{if $\ i_2\in S-\{N\}$ and $\ \sum^{i_2-1}_{j=1}P(i_1,j)\leq\xi<\sum^{i_2}_{j=1}P(i_1,j)$},\\[.5cm]
N, &\text{if $\ \sum^{N-1}_{j=1}P(i_1,j)\leq\xi.$}
\end{cases}
\end{equation*}
Repeating this procedure a trajectory of $\{r(t_k)\}_{k\geq1}$ can
be simulated.

Now we could define the Euler approximation solution of (\ref{eq:jm1}). Let
\[X_{t_0}=y_0,\ r_{t_0}=i_0,\]
\begin{equation}\label{eq:em}
\begin{split}
X_{t_{k+1}}&=X_{t_k}+b(X_{t_k},r_{t_k})(t_{k+1}-t_{k})+\sigma(X_{t_k},r_{t_k})(W({t_{k+1}})-W({t_k}))\\
&\quad+\sum_{j=1}^{N_T}g(X_{t_k},r_{t_k},v_j)I_{\{t_{k+1}=\tau_j\}}.
\end{split}
\end{equation}
Let
\[\bar{X}(t)=X_{t_k},\ \bar{r}(t)=r_{t_k}\         (t_k\leq t<t_{k+1}).\]

Hence we can define the approximation solution $X(t)$ on the
entire interval $[0,T]$ by
\begin{equation}\label{eq:emc}
\begin{split}
X(t)&=y_0+\int_{0}^{t}b(\bar{X}(s),\bar{r}(s))ds+\int_{0}^{t}\sigma(\bar{X}(s),\bar{r}(s))dW(s)\\
&\quad+\int_{0}^{t}\int_{\Gamma}
g(\bar{X}(s^-),\bar{r}(s),v)N(ds,dv).
\end{split}
\end{equation}

\begin{Rem}
 $\sum_{j=1}^{N_T}g(X_{t_k},r_{t_k},v_j)I_{\{t_{k+1}=\tau_j\}}$
only take nonzero values on stopping times $\{\tau_j\}_{j\geq1}$,
thus $X(t)$ is continuous on $[0,T]\setminus\{\tau_j\}_{j\geq1}$.
Moreover, we have $X(t_k)=\bar{X}(t_k)=X_{t_k}$.
\end{Rem}

\section{Convergence with the Lipschitz and linear growth conditions}
In this section, we will prove that the numerical solution $X(t)$
converges to the exact solution $y(t)$ in $L^2$ as step size
$\Delta\downarrow0$, and the order of convergence is one-half,
i.e.
\begin{equation}
E(\sup_{0\leq t\leq T}|X(t)-y(t)|^2)\leq C\Delta+o(\Delta).
\end{equation}

To begin with, we need the following lemma.
\begin{Lem}
Under conditions $(\mathcal{H}1)$, $(\mathcal{H}2)$ and
$(\mathcal{H}3)$, there exists a constant $M$ which is dependent on
$T$, $L$, $y_0$, but independent of $\Delta$, such that
\begin{equation}\label{eq:bound}
E(\sup_{0\leq t\leq T}|y(t)|^2)\vee E(\sup_{0\leq t\leq
T}|X(t)|^2)\leq M.
\end{equation}
\end{Lem}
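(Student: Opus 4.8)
The plan is to prove the two bounds in (3.2) by a single moment estimate, handling $y(t)$ and $X(t)$ in parallel, since both satisfy integral equations of identical structure; the only difference is that the integrands for $X(t)$ are evaluated at the piecewise-constant interpolants $\bar X,\bar r$ rather than at $y,r$. First I would rewrite the jump integral through the compensated measure, writing $\int_0^t\int_\Gamma g\,N(ds,dv)=\int_0^t\int_\Gamma g\,\widetilde N(ds,dv)+\int_0^t\int_\Gamma g\,\Pi(dv)\,ds$, so that the first summand is a martingale and the second acts as an additional drift. Applying the elementary inequality $|\sum_{k=1}^{5}a_k|^2\le 5\sum_{k=1}^{5}|a_k|^2$ to the five resulting terms (initial datum, drift, Brownian integral, compensated jump integral, compensator), then taking $\sup_{0\le u\le t}$ and expectations, reduces the problem to bounding five expectations.

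Second, I would estimate each term. For the drift and the compensator I use the Cauchy--Schwarz inequality in time (and, for the compensator, also in $v$, exploiting $\Pi(\Gamma)=\lambda<\infty$) together with the linear-growth bounds: $(\mathcal H2)$ for $g$ and the growth estimate $|b(x,i)|^2+|\sigma(x,i)|^2\le L(1+|x|^2)$ recorded in Remark 2.2. For the two stochastic integrals I invoke Doob's maximal inequality and the It\^{o} isometry / Burkholder--Davis--Gundy inequality, which convert $E\sup_{0\le u\le t}|\cdot|^2$ into $C\,E\int_0^t(\cdots)\,ds$, again controlled by the growth conditions. Collecting the bounds and writing $\phi(t)=E\sup_{0\le u\le t}|y(u)|^2$ (respectively $\psi(t)=E\sup_{0\le u\le t}|X(u)|^2$) yields an integral inequality $\phi(t)\le C_1+C_2\int_0^t\phi(s)\,ds$ with $C_1,C_2$ depending only on $T,L,y_0$.

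For the numerical solution the same computation goes through once I observe that $\bar X(s)$ equals one of the earlier grid values $X_{t_k}$, so that $\sup_{0\le u\le s}|\bar X(u)|\le\sup_{0\le u\le s}|X(u)|$; hence the growth terms are still dominated by $\psi(s)$, and, crucially, the constants $C_1,C_2$ do not involve the step size $\Delta$, because the growth constant $L$ is uniform and the partition never enters the estimates. A final application of Gronwall's inequality gives $\phi(T)\le C_1 e^{C_2 T}$ and $\psi(T)\le C_1 e^{C_2 T}$, and taking $M$ to be this common value proves (3.2).

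I expect the main technical obstacle to be the rigorous justification of Gronwall's lemma, since the inequality $\phi(t)\le C_1+C_2\int_0^t\phi(s)\,ds$ is only useful once $\phi$ is known to be finite. To handle this I would localize with the stopping times $\theta_n=\inf\{t\ge0:|y(t)|\ge n\}$ (and analogously for $X$), carry out all the estimates for the stopped processes $y(t\wedge\theta_n)$, whose suprema are bounded by $n$, to obtain a bound uniform in $n$, and then let $n\to\infty$ via Fatou's lemma. A secondary point requiring care is the compensated Poisson integral under $(\mathcal H3)$: I must check that the support condition on $g$ legitimately confines the $v$-integration to the set $U$ of finite $\Pi$-measure, so that the isometry produces a finite bound.
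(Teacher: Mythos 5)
Your proposal is correct and follows essentially the same route as the paper: split the integral equation into its initial, drift, Brownian, and jump parts (the jump part further decomposed via the compensated measure $\widetilde N$), bound each by Doob's maximal inequality, the It\^{o} isometry, Cauchy--Schwarz in time, and the linear-growth conditions, then close with Gronwall, treating $X(t)$ identically after noting $|\bar X(s)|^2\vee|\bar X(s^-)|^2\le\sup_{0\le u\le s}|X(u)|^2$. Your added localization by stopping times to justify the finiteness needed for Gronwall is a sound refinement that the paper's proof passes over silently.
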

\begin{proof} From  H\"{o}lder inequality, we have
\begin{equation*}
\begin{split}
|y(t)|^2 &=
|y_0+\int_{0}^{t}b(y(s),r(s))ds+\int_{0}^{t}\sigma(y(s),r(s))dW(s)\\
&\quad+\int_{0}^{t}\int_{\Gamma}g(y(s^-),r(s),v)N(ds,dv)|^2 \\
&\leq4|y_0|^2+4|\int_{0}^{t}b(y(s),r(s))ds|^2+4|\int_{0}^{t}\sigma(y(s),r(s))dW(s)|^2\\
&\quad+4|\int_{0}^{t}\int_{\Gamma}g(y(s^-),r(s),v)N(ds,dv)|^2\\
&\leq4|y_0|^2+4t\int_{0}^{t}|b(y(s),r(s))|^2ds+4|\int_{0}^{t}\sigma(y(s),r(s))dW(s)|^2\\
&\quad+4|\int_{0}^{t}\int_{\Gamma}g(y(s^-),r(s),v)N(ds,dv)|^2.
\end{split}
\end{equation*}
Thus for any $0\leq T'\leq T$, we have
\begin{equation}\label{eq:lemp1}
\begin{split}
 \sup_{0\leq t\leq T'}|y(t)|^2 &\leq
4|y_0|^2+4T\int_{0}^{T'}|b(y(s),r(s))|^2ds \\
&\quad+4\sup_{0\leq t\leq T'}|\int_{0}^{t}\sigma(y(s),r(s))dW(s)|^2 \\
&\quad+4\sup_{0\leq t\leq T'}|\int_{0}^{t}\int_{\Gamma}
g(y(s^-),r(s),v)N(ds,dv)|^2.
\end{split}
\end{equation}
From $(\mathcal{H}1)$ and Remark 2.2, we have
\begin{equation}\label{eq:lemp2}
\begin{split}
E\int_{0}^{T'}|b(y(s),r(s))|^2ds &\leq E\int_{0}^{T'}L(1+|y(s)|^2)ds\\
&\leq C_1+C_1\int_{0}^{T'}E|y(s)|^2ds.
\end{split}
\end{equation}
Here $C_1$ is a positive constant, in fact, $C_1=T'L\vee L$ in
(\ref{eq:lemp2}). For convenience, in the following of this paper, we may
frequently denote the related constants by $C_k$. In that case, we
only mean there exist such a positive constant rather than some
specific one.

 Since
$\displaystyle\int_{0}^{t}\sigma(y(s),r(s))dW(s)$ is martingale
(see \cite{A1}), therefore, from Doob martingale inequality and
($\mathcal{H}$1), we have

\begin{equation}\label{eq:lemp3}
\begin{split}
&E(\sup_{0\leq t\leq
T'}|\int_{0}^{t}\sigma(y(s),r(s))dW(s)|^2)\\
&\quad\leq
4E|\int_{0}^{T'}\sigma(y(s),r(s))dW(s)|^2\\
&\quad=4E\int_{0}^{T'}|\sigma(y(s),r(s))|^2ds\\
&\quad\leq 4E\int_{0}^{T'}L(1+|y(s)|^2)ds\\
&\quad\leq C_2+C_2\int_{0}^{T'}E|y(s)|^2ds.
\end{split}
\end{equation}

For the last term of (\ref{eq:lemp1}), notice that
$\widetilde{N}(dt,dv)=N(dt,dv)-\Pi(dv)dt$ is the compensated
Poisson random measure, and $\displaystyle
\int_{0}^{t}\int_{\Gamma}g(y(s^-),r(s),v)\widetilde{N}(ds,dv)$ is
martingale (see \cite{23}), we have
\begin{equation}\label{eq:lemp4}
\begin{split}
&E(\sup_{0\leq t\leq T'}|\int_{0}^{t}\int_{\Gamma}
g(y(s^-),r(s),v)N(ds,dv)|^2)\\
&\quad=E(\sup_{0\leq t\leq T'}|\int_{0}^{t}\int_{\Gamma}
g(y(s^-),r(s),v)\widetilde{N}(ds,dv)\\
&\quad\quad+\int_{0}^{t}\int_{\Gamma}
g(y(s^-),r(s),v)\Pi(dv)ds|^2)\\
&\quad\leq 2E(\sup_{0\leq t\leq T'}|\int_{0}^{t}\int_{\Gamma}
g(y(s^-),r(s),v)\widetilde{N}(ds,dv)|^2)\\
&\quad\quad+2 E(\sup_{0\leq t\leq T'}|\int_{0}^{t}\int_{\Gamma}
g(y(s^-),r(s),v)\Pi(dv)ds|^2).\\
\end{split}
\end{equation}
Hence, by Doob martingale inequality, we have
\begin{equation}\label{eq:lemp5}
\begin{split}
&E(\sup_{0\leq t\leq T'}|\int_{0}^{t}\int_{\Gamma}
g(y(s^-),r(s),v)\widetilde{N}(ds,dv)|^2)\\
&\quad\leq4E|\int_{0}^{T'}\hspace{-5pt}\int_{\Gamma}
g(y(s^-),r(s),v)\widetilde{N}(ds,dv)|^2\\
&\quad =4E\int_{0}^{T'}\hspace{-5pt}\int_{\Gamma}
|g(y(s^-),r(s),v)|^2\Pi(dv)ds.
\end{split}
\end{equation}
On the other hand, we can get
\begin{equation}\label{eq:lemp7}
\begin{split}
&E(\sup_{0\leq t\leq T'}|\int_{0}^{t}\int_{\Gamma}
g(y(s^-),r(s),v)\Pi(dv)ds|^2)\\
&\quad\leq E(\sup_{0\leq t\leq
T'}t\int_{0}^{t}\int_{\Gamma} |g(y(s^-),r(s),v)|^2\Pi(dv)ds)\\
&\quad\leq TE\int_{0}^{T'}\hspace{-5pt}\int_{\Gamma}
|g(y(s^-),r(s),v)|^2\Pi(dv)ds.
\end{split}
\end{equation}
Combining (\ref{eq:lemp4}), (\ref{eq:lemp5}), (\ref{eq:lemp7}) and
($\mathcal{H}$2), we obtain that
\begin{equation}\label{eq:lemp8}
\begin{split}
&E(\sup_{0\leq t\leq T'}|\int_{0}^{t}\int_{\Gamma}
g(y(s^-),r(s),v)N(ds,dv)|^2)\\
&\quad\leq (8+2T)E\int_{0}^{T'}\hspace{-5pt}\int_{\Gamma}
|g(y(s^-),r(s),v)|^2\Pi(dv)ds\\
&\quad\leq (8+2T)E\int_{0}^{T'}L(1+|y(s^-)|)^2ds\\
&\quad\leq C_3+C_3\int_{0}^{T'}E|y(s^-)|^2ds.
\end{split}
\end{equation}
Take expectation of (\ref{eq:lemp1}) and substitute(\ref{eq:lemp2}), (\ref{eq:lemp3}), (\ref{eq:lemp8}) into it,
we have
\begin{equation}\label{eq:lemp9}
\begin{split}
&E(\sup_{0\leq t\leq T'}|y(t)|^2)\\
&\quad\leq4|y_0|^2+(C_1+C_2)\int_{0}^{T'}E|y(t)|^2dt+C_3\int_{0}^{T'}E|y(t^-)|^2dt\\
&\quad\leq C_4+C_4\int_{0}^{T'}E(\sup_{0\leq s\leq t}|y(s)|^2)dt.
\end{split}
\end{equation}
From Gronwall inequality, we can show that
\[E(\sup_{0\leq t\leq T}|y(t)|^2)\leq C_4(1+C_4Te^{C_4T})=M_1.\]
Similar to above procedure, we know that for $X(t)$
\[E(\sup_{0\leq t\leq T'}|X(t)|^2)\leq
4|y_0|^2+(C_1+C_2)\int_{0}^{T'}E|\bar{X}(t)|^2dt+C_3\int_{0}^{T'}E|\bar{X}(t^-)|^2dt.\]
According to Remark 2.3
\[\bar{X}(t)=\bar{X}(t^-)=X_{t_k}=X(t_k),\quad t_k<t<t_{k+1},\]
\[\bar{X}(t)=X_{t_k}=X(t_k),\    \bar{X}(t^-)=X_{t_{k-1}}=X(t_{k-1}), \quad t=t_k.\]
In both cases we have
\[|\bar{X}(t)|^2\vee|\bar{X}(t^-)|^2\leq \sup_{0\leq s\leq t}|X(s)|^2.\]
Thus by Gronwall inequality
\[E(\sup_{0\leq t\leq T}|X(t)|^2)\leq M_2.\]
Finally there exists a positive constant $M$ such that
\[E(\sup_{0\leq t\leq T}|y(t)|^2)\vee
E(\sup_{0\leq t\leq T}|X(t)|^2)\leq M.\]
\end{proof}

Now we come back to our main theorem, the proof below mainly refers
to Yuan and Mao \cite{10}.
\begin{theorem}
Assume the jump diffusion with Markovian switching (\ref{eq:jm1}) defined on $(\Omega,\mathcal{F},\{\mathcal{F}_t\}_{t\geq0},P)$ satisfying\\[.5cm]
\textbf{a}) $W(t),r(t),N(dt,dv)$ are independent,\\[.5cm]
\textbf{b}) $b(\cdot,\cdot),\ \sigma(\cdot,\cdot),\
g(\cdot,\cdot,\cdot)$ satisfy conditions $(\mathcal{H}1)$, $(\mathcal{H}2)$ and $(\mathcal{H}3)$, \\[.5cm]
then the unique strong solution $y(t)$ and numerical solution
$X(t)$ obtained in section 2.2 satisfying:
\[E(\sup_{0\leq t\leq T}|X(t)-y(t)|^2)\leq C\Delta+o(\Delta).\]
where $C$ is a positive constant independent of $\Delta$.
\end{theorem}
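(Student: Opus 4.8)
The plan is to bound the error process $e(t)=X(t)-y(t)$ in the $L^2$-sup norm by writing it as a sum of four integral terms (drift, diffusion, jump, plus the switching discrepancy coming from $\bar r$ versus $r$) and then closing a Gronwall-type inequality in $\Delta$. First I would fix $0\le T'\le T$ and use the integral representations (\ref{eq:jm1}) and (\ref{eq:emc}) to write
\[
e(t)=\int_0^t\!\bigl(b(\bar X(s),\bar r(s))-b(y(s),r(s))\bigr)ds
+\int_0^t\!\bigl(\sigma(\bar X,\bar r)-\sigma(y,r)\bigr)dW
+\int_0^t\!\!\int_\Gamma\bigl(g(\bar X(s^-),\bar r,v)-g(y(s^-),r,v)\bigr)N(ds,dv).
\]
Applying $|\sum_{i=1}^3 a_i|^2\le 3\sum|a_i|^2$, taking $\sup_{0\le t\le T'}$ and then expectation, I would control each term exactly as in the proof of Lemma 3.1: the drift term by Hölder, the diffusion term by Doob's inequality together with the Itô isometry, and the jump term by splitting $N=\widetilde N+\Pi\,dt$, using Doob on the martingale part and Hölder on the compensator part. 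This reduces everything to controlling the integrand differences in $L^2$.

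The key reduction is to split each integrand difference into a spatial part and a coefficient/mode part, e.g.
\[
|b(\bar X(s),\bar r(s))-b(y(s),r(s))|^2
\le 2|b(\bar X(s),\bar r(s))-b(y(s),\bar r(s))|^2
+2|b(y(s),\bar r(s))-b(y(s),r(s))|^2,
\]
and similarly for $\sigma$ and $g$. The first piece is handled by $(\mathcal H1)$ and gives $\le 2L\,|\bar X(s)-y(s)|^2$. Then I would insert the further split $|\bar X(s)-y(s)|^2\le 2|\bar X(s)-X(s)|^2+2|X(s)-y(s)|^2$; the second summand is $\le 2\sup_{0\le u\le s}|e(u)|^2$ (the quantity driving the Gronwall loop), while the first, $E|\bar X(s)-X(s)|^2$, measures how far the numerical solution drifts within one subinterval. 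Using (\ref{eq:em}), the linear growth of Remark 2.2 and the $L^2$ bound $M$ from Lemma 3.1, one shows $E|X(s)-\bar X(s)|^2\le C\Delta$ on each grid interval, since $t_{k+1}-t_k\le\Delta$ and the Brownian and compensated-Poisson increments contribute variance of order $\Delta$.

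The main obstacle is the mode-discrepancy term $E\!\int_0^{T'}|b(y(s),\bar r(s))-b(y(s),r(s))|^2ds$ (and its $\sigma,g$ analogues), which does not vanish under the Lipschitz hypotheses since $(\mathcal H1)$ only controls the spatial variable. Here I would exploit that $\bar r(s)=r(t_k)$ and $r(s)$ agree unless the continuous-time chain $r$ has switched inside $[t_k,s)$; conditioning on $\mathcal F_{t_k}$ and using the generator $Q$, the probability of such a switch on an interval of length $\le\Delta$ is $O(\Delta)$, uniformly in the state, because $P\{r(t+\delta)\ne r(t)\}=-q_{ii}\delta+o(\delta)$. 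Combining this with the linear growth of the coefficients and the uniform bound $E\sup|y(s)|^2\le M$ yields a contribution of order $\Delta$ (indeed $C\Delta+o(\Delta)$) for the whole mode-mismatch. Assembling all estimates gives
\[
E\bigl(\sup_{0\le t\le T'}|e(t)|^2\bigr)\le C\Delta+o(\Delta)+C\int_0^{T'}E\bigl(\sup_{0\le u\le t}|e(u)|^2\bigr)dt,
\]
and Gronwall's inequality (exactly as at the end of Lemma 3.1) converts the $C\Delta+o(\Delta)$ free term into the claimed bound $E(\sup_{0\le t\le T}|X(t)-y(t)|^2)\le C\Delta+o(\Delta)$, giving order of convergence one-half.
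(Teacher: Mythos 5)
Your proposal is correct and follows essentially the same route as the paper: the same three-term decomposition with Doob/It\^{o} isometry estimates, the same split of each coefficient difference into a spatial part (handled by $(\mathcal{H}1)$) and a mode-mismatch part (handled by the $O(\Delta)$ switching probability from the generator $Q$ together with linear growth and Lemma 3.1), the same one-step bound $E|X(s)-\bar{X}(s)|^2\leq C\Delta+o(\Delta)$, and the same Gronwall closing. The only cosmetic difference is that you anchor the mode-mismatch term at $y(s)$ while the paper anchors it at $\bar{X}(s^-)$ and conditions explicitly on $\{N_T,\tau_1,\dots,\tau_{N_T}\}$ to deal with the randomness of the jump-adapted grid.
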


\begin{proof}
For every $0\leq T'\leq T$, similar to Lemma 3.1 we can easily
verify that
\begin{equation}\label{eq:theop1}
\begin{split}
&E(\sup_{0\leq t\leq T'}|X(t)-y(t)|^2)\\
&\quad\leq 3TE\int_{0}^{T'}|b(\bar{X}(s),\bar{r}(s))-b(y(s),r(s))|^2ds\\
&\quad\quad+12E\int_{0}^{T'}|\sigma(\bar{X}(s),\bar{r}(s))-\sigma(y(s),r(s))|^2ds \\
&\quad\quad+CE\int_{0}^{T'}\hspace{-5pt}\int_{\Gamma}|g(\bar{X}(s^-),\bar{r}(s),v)-g(y(s^-),r(s),v)|^2\Pi(dv)ds.
\end{split}
\end{equation}
We focus on the third part of the right side
\begin{equation}\label{eq:theop2}
\begin{split}
&E\int_{0}^{T'}\hspace{-5pt}\int_{\Gamma}|g(\bar{X}(s^-),\bar{r}(s),v)-g(y(s^-),r(s),v)|^2\Pi(dv)ds\\
&\quad\leq2E\int_{0}^{T'}\hspace{-5pt}\int_{\Gamma}|g(\bar{X}(s^-),r(s),v)-g(y(s^-),r(s),v)|^2\Pi(dv)ds\\
&\quad\quad+2E\int_{0}^{T'}\hspace{-5pt}\int_{\Gamma}|g(\bar{X}(s^-),\bar{r}(s),v)-g(\bar{X}(s^-),r(s),v)|^2\Pi(dv)ds.\\
\end{split}
\end{equation}
By condition ($\mathcal{H}$1), we have
\begin{equation}\label{eq:theop3}
\begin{split}
&E\int_{0}^{T'}\hspace{-5pt}\int_{\Gamma}|g(\bar{X}(s^-),r(s),v)-g(y(s^-),r(s),v)|^2\Pi(dv)ds\\
&\quad \leq2L^2\int_{0}^{T'}E|\bar{X}(s^-)-y(s^-)|^2ds.
\end{split}
\end{equation}
By ($\mathcal{H}$2), we can compute that
\begin{equation}\label{eq:theop4}
\begin{split}
&E\int_{0}^{T}\int_{\Gamma}|g(\bar{X}(s^-),\bar{r}(s),v)-g(\bar{X}(s^-),r(s),v)|^2\Pi(dv)ds\\
&\quad=E[E[\int_{0}^{T}\int_{\Gamma}|g(\bar{X}(s^-),\bar{r}(s),v)\\
&\quad\quad-g(\bar{X}(s^-),r(s),v)|^2\Pi(dv)ds\mid N_T,\tau_1,...\tau_{N_T}]]\\
&\quad=E[\sum_{k\geq0}
E[\int_{t_k}^{t_{k+1}}\int_{\Gamma}|g(\bar{X}(s^-),\bar{r}(s),v)\\
&\quad\quad-g(\bar{X}(s^-),r(s),v)|^2\Pi(dv)ds\mid N_T,\tau_1,...\tau_{N_T}]]\\
&\quad=E[\sum_{k\geq0}E[\int_{t_k}^{t_{k+1}}\int_{\Gamma}|g(\bar{X}(t_k),r(t_k),v)-g(\bar{X}(t_k),r(s),v)|^2\\
&\quad\quad \times I_{\{r(s)\neq r(t_k)\}}\Pi(dv)ds\mid N_T,\tau_1,...\tau_{N_T}]]\\
&\quad\leq2E[\sum_{k\geq0}E[\int_{t_k}^{t_{k+1}}\int_{\Gamma}(|g(\bar{X}(t_k),r(t_k),v)|^2+|g(\bar{X}(t_k),r(s),v)|^2)\\
&\quad\quad \times I_{\{r(s)\neq r(t_k)\}}\Pi(dv)ds\mid N_T,\tau_1,...\tau_{N_T}]]\\
&\quad\leq CE[\sum_{k\geq0}
E[\int_{t_k}^{t_{k+1}}[1+|\bar{X}(t_k)|^2]I_{\{r(s)\neq r(t_k)\}}ds\mid N_T,\tau_1,...\tau_{N_T}]].\\
\end{split}
\end{equation}
Note that given $\{N_T,\tau_1,...\tau_{N_T}\}$, $1+|\bar{X}(t_k)|^2$
and $I_{\{r(s)\neq r(t_k)\}}$ are conditionally independent with
respect to $r(t_k)$, thus
\begin{equation}\label{eq:theop5}
\begin{split}
&E\int_{t_k}^{t_{k+1}}[1+|\bar{X}(t_k)|^2]I_{\{r(s)\neq r(t_k)\}}ds\\
&\quad=\int_{t_k}^{t_{k+1}}E[E[(1+|\bar{X}(t_k)|^2)I_{\{r(s)\neq r(t_k)\}}\mid r(t_k)]]ds\\
&\quad=\int_{t_k}^{t_{k+1}}E[E[(1+|\bar{X}(t_k)|^2)\mid
r(t_k)]E[I_{\{r(s)\neq r(t_k)\}}\mid r(t_k)]]ds.
\end{split}
\end{equation}
On the other hand, by the Markov property, we get
\begin{equation}\label{eq:theop6}
\begin{split}
E[I_{\{r(s)\neq r(t_k)\}}\mid r(t_k)]]&=\sum_{i\in S}I_{\{r(t_k)=i\}}P(r(s)\neq i\mid r(t_k)=i)\\
&=\sum_{i\in S}I_{\{r(t_k)=i\}}(-q_{ii}(s-t_k)+o(s-t_k))\\
&\leq \sum_{i\in S}I_{\{r(t_k)=i\}}(max(-q_{ii})\Delta+o(\Delta))\\
&\leq C\Delta+o(\Delta).
\end{split}
\end{equation}
Substituting (\ref{eq:theop6}) into (\ref{eq:theop5}) gives
\begin{equation*}\label{eq:barwq}
\begin{split}
&E\int_{t_k}^{t_{k+1}}[1+|\bar{X}(t_k)|^2]I_{\{r(s)\neq r(t_k)\}}ds\\
&\quad\leq \int_{t_k}^{t_{k+1}}E[E[(1+|\bar{X}(t_k)|^2)\mid r(t_k)](C\Delta+o(\Delta))]ds\\
&\quad\leq (C\Delta+o(\Delta))\int_{t_k}^{t_{k+1}}E[E[(1+|\bar{X}(t_k)|^2)\mid r(t_k)]]ds\\
&\quad=(C\Delta+o(\Delta))\int_{t_k}^{t_{k+1}}E[1+|\bar{X}(t_k)|^2]ds.
\end{split}
\end{equation*}
By Lemma 3.1 there exists a positive constant $M$ such that
\[E[1+|\bar{X}(t_k)|^2]\leq 1+E(\sup_{0\leq t\leq T}|X(t)|^2)\leq M,\]
then
\begin{equation}\label{eq:theop7}
E\int_{t_k}^{t_{k+1}}[1+|\bar{X}(t_k)|^2]I_{\{r(s)\neq r(t_k)\}}ds
\leq (t_{k+1}-t_k)(C\Delta+o(\Delta)).
\end{equation}
Substitute (\ref{eq:theop7}) into (\ref{eq:theop4})
\begin{equation}\label{eq:theop8}
\begin{split}
&E\int_{0}^{T}\int_{\Gamma}|g(\bar{X}(s^-),\bar{r}(s),v)-g(\bar{X}(s^-),r(s),v)|^2\Pi(dv)ds\\
&\quad\leq CE[\sum_{k\geq0}E[(t_{k+1}-t_k)(C\Delta+o(\Delta))\mid N_T,\tau_1,...\tau_{N_T}]]\\
&\quad=CE[\sum_{k\geq0}(t_{k+1}-t_k)(C\Delta+o(\Delta))]\\
&\quad\leq C\Delta+o(\Delta).
\end{split}
\end{equation}
Here we use the fact that $(t_{k+1}-t_k)(C\Delta+o(\Delta))$ are
measurable with respect to the $\sigma$-algebra generated by
$\{N_T,\tau_1,...\tau_{N_T}\}$.\\
Now substitute (\ref{eq:theop3}), (\ref{eq:theop8}) into (\ref{eq:theop2}), we obtain that
\begin{equation}\label{eq:theop9}
\begin{split}
&E\int_{0}^{T'}\hspace{-5pt}\int_{\Gamma}|g(\bar{X}(s^-),\bar{r}(s),v)-g(y(s^-),r(s),v)|^2\Pi(dv)ds\\
&\quad\leq2L^2\int_{0}^{T'}E|\bar{X}(s^-)-y(s^-)|^2ds+C\Delta+o(\Delta).
\end{split}
\end{equation}
Similar to (\ref{eq:theop9}), we have
\begin{equation}\label{eq:theop10}
\begin{split}
&E\int_{0}^{T'}|b(\bar{X}(s),\bar{r}(s))-b(y(s),r(s))|^2ds\\
&\quad\leq2L^2\int_{0}^{T'}E|\bar{X}(s)-y(s)|^2ds+C\Delta+o(\Delta),
\end{split}
\end{equation}
\begin{equation}\label{eq:theop11}
\begin{split}
&E\int_{0}^{T'}|\sigma(\bar{X}(s),\bar{r}(s))-\sigma(y(s),r(s))|^2ds \\
&\quad\leq2L^2\int_{0}^{T'}E|\bar{X}(s)-y(s)|^2ds+C\Delta+o(\Delta).
\end{split}
\end{equation}
Substituting (\ref{eq:theop9}), (\ref{eq:theop10}), (\ref{eq:theop11}) into (\ref{eq:theop1}) shows that
\begin{equation}\label{eq:theop12}
\begin{split}
&E(\sup_{0\leq t\leq T'}|X(t)-y(t)|^2)\\
&\quad\leq C\int_{0}^{T'}E|\bar{X}(s)-y(s)|^2ds\\
&\quad\quad+C\int_{0}^{T'}E|\bar{X}(s^-)-y(s^-)|^2ds+C\Delta+o(\Delta).
\end{split}
\end{equation}
Note that
\begin{equation}\label{eq:theop13}
E|\bar{X}(s)-y(s)|^2\leq 2E|X(s)-y(s)|^2+2E|\bar{X}(s)-X(s)|^2,
\end{equation}
\begin{equation}\label{eq:theop14}
E|\bar{X}(s^-)-y(s^-)|^2\leq2E|X(s^-)-y(s^-)|^2+2E|\bar{X}(s^-)-X(s^-)|^2.
\end{equation}
Suppose $t_k\leq s< t_{k+1}$, then
\begin{align*}
|X(s)-\bar{X}(s)|^2&=|X(s)-X(t_k)|^2\\[.2cm]
&=|\int_{t_k}^{s}b(\bar{X}(u),\bar{r}(u))du+\int_{t_k}^{s} \sigma(\bar{X}(u),\bar{r}(u))dW(u)\\[.3cm]
&\quad+\int_{t_k}^{s}\int_{\Gamma}g(\bar{X}(u^-),\bar{r}(u),v)N(du,dv)|^2\\[.3cm]
&=|b(X_{t_k},r_{t_k})(s-t_k)+\sigma(X_{t_k},r_{t_k})(W(s)-W({t_k}))\\[.3cm]
&\quad+\int_{t_k}^{s}\int_{\Gamma}g(X_{t_k},r_{t_k},v)N(du,dv)|^2\\[.3cm]
&\leq3|b(X_{t_k},r_{t_k})|^2(s-t_k)^2+3|\sigma(X_{t_k},r_{t_k})|^2(W(s)-W({t_k}))^2\\[.3cm]
&\quad+3|\int_{t_k}^{s}\int_{\Gamma}g(X_{t_k},r_{t_k},v)N(du,dv)|^2.\\[.3cm]
\end{align*}
Since for given $\{t_k\}_{k\geq1}$, $W(s)-W({t_k})$ and
$(X({t_k}),r_{t_k})$ are independent, we have
\begin{align*}
&E|X(s)-\bar{X}(s)|^2\\
&\quad\leq 3E[|b(X_{t_k},r_{t_k})|^2(s-t_k)^2+|\sigma(X_{t_k},r_{t_k})|^2(W(s)-W({t_k}))^2\\[.3cm]
&\quad\quad+|\int_{t_k}^{s}\int_{\Gamma}g(X_{t_k},r_{t_k},v)N(du,dv)|^2]\\[.3cm]
&\quad=3E[E[|b(X_{t_k},r_{t_k})|^2(s-t_k)^2+|\sigma(X_{t_k},r_{t_k})|^2(W(s)-W({t_k}))^2\\[.3cm]
&\quad\quad+|\int_{t_k}^{s}\int_{\Gamma}g(X_{t_k},r_{t_k},v)N(du,dv)|^2\mid t_k]]\\[.3cm]
&\quad\leq 3E[E[|b(X_{t_k},r_{t_k})|^2\Delta^2\mid t_k]+E[|\sigma(X_{t_k},r_{t_k})|^2(W(s)-W({t_k}))^2\mid t_k]\\[.3cm]
&\quad\quad+E[|\int_{t_k}^{s}\int_{\Gamma}g(X_{t_k},r_{t_k},v)N(du,dv)|^2\mid t_k]]\\[.3cm]
&\quad=3\Delta^2E[|b(X_{t_k},r_{t_k})|^2]+3E[E[|\sigma(X_{t_k},r_{t_k})|^2\mid t_k]E[(W(s)-W({t_k}))^2\mid t_k]]\\[.3cm]
&\quad\quad+3E[E[\int_{t_k}^{s}\int_{\Gamma}|g(X_{t_k},r_{t_k},v)|^2\Pi(dv)du\mid t_k]]\\[.3cm]
&\quad=3\Delta^2E[|b(X_{t_k},r_{t_k})|^2]+3E[E[|\sigma(X_{t_k},r_{t_k})|^2\mid t_k](s-t_k)]\\[.3cm]
&\quad\quad+3E[E[\int_{\Gamma}|g(X_{t_k},r_{t_k},v)|^2\Pi(dv)\mid t_k](s-t_k)]\\[.3cm]
&\quad\leq 3E[|b(X_{t_k},r_{t_k})|^2]+3E[E[|\sigma(X_{t_k},r_{t_k})|^2\mid t_k]\Delta]\\[.3cm]
&\quad\quad+3E[E[\int_{\Gamma}|g(X_{t_k},r_{t_k},v)|^2\Pi(dv)\mid t_k]\Delta]\\[.3cm]
&\quad=3\Delta^2E[|b(X_{t_k},r_{t_k})|^2]+3\Delta E[|\sigma(X_{t_k},r_{t_k})|^2]\\[.3cm]
&\quad\quad+3\Delta E[\int_{\Gamma}|g(X_{t_k},r_{t_k},v)|^2\Pi(dv)].
\end{align*}
Hence by ($\mathcal{H}$1) and Lemma 3.1, we get
\begin{equation}\label{eq:theop15}
E|\bar{X}(s)-X(s)|^2\leq C\Delta+o(\Delta).
\end{equation}
Similarly, we can show that
\begin{equation}\label{eq:theop16}
E|\bar{X}(s^-)-X(s^-)|^2\leq C\Delta+o(\Delta).\\[.3cm]
\end{equation}

Substituting (\ref{eq:theop13}), (\ref{eq:theop14}), (\ref{eq:theop15}), (\ref{eq:theop16})
into (\ref{eq:theop12}) immediately
shows that
\begin{equation*}
\begin{split}
&E(\sup_{0\leq t\leq T'}|X(t)-y(t)|^2)\\
&\quad\leq C\int_{0}^{T'}(E|\bar{X}(s)-y(s)|^2+E|\bar{X}(s^-)-y(s^-)|^2)ds+C\Delta+o(\Delta)\\
&\quad\leq C\int_{0}^{T'}E(\sup_{0\leq s\leq
t}|X(s)-y(s)|^2)dt+C\Delta+o(\Delta).
\end{split}
\end{equation*}
Therefore, from Gronwall inequality we obtain that
\[E(\sup_{0\leq t\leq T}|X(t)-y(t)|^2)\leq C\Delta+o(\Delta).\]
The proof is complete.
\end{proof}

\section{Numerical examples}
In this section, we discuss two numerical examples to illustrate
our theory established in the previous sections. It is shown that
the computer simulation based on our numerical method is feasible
and efficient.\\

\noindent \textsl{Example 4.1}. Consider the following
one-dimensional geometric L\'{e}vy processes
\begin{equation}\label{eq:gl}
\begin{split}
y(t)&=y_0+\int_{0}^{t}y(s)\mu(r(s))ds+\int_{0}^{t}y(s)\sigma(r(s))dW(s)\\
&\quad+\int_{0}^{t}y(s^-)g(r(s))dN(s),
\end{split}
\end{equation}
with initial value $y(0)=y_0$, $r(0)=i_0$, where $W(t)$ is a
scalar Brownian motion, $N(t)$ is a Poisson process with intensity
$\lambda>0$ and $r(t)$ is a right-continuous Markov chain taking
values in $S$, suppose that $r(t)$, $W(t)$, $N(t)$ are
independent. It is obvious that the assumptions ($\mathcal{H}$1),
($\mathcal{H}$2), ($\mathcal{H}$3) are satisfied for equation
(\ref{eq:gl}). It is well known that equation (\ref{eq:gl}) has an explicit
solution
\begin{equation}\label{eq:gls}
\begin{split}
y(t)=&y_0\textrm{exp}[\int_{0}^{t}\mu(r(s))ds+\int_{0}^{t}\sigma(r(s))dW(s)-\frac{1}{2}\int_{0}^{t}\sigma^2(r(s))ds]\\
&\times\prod_{n=1}^{N(t)}(1+g(r(\tau_n))),
\end{split}
\end{equation}
where $\tau_n$ are jump times determined by Poisson process
$N(t)$.

 As described in Section 2, we get the numerical solution of (\ref{eq:gl})
\begin{equation}\label{eq:gln}
\begin{split}
X(t)&=y_0+\int_{0}^{t}\bar{X}(s)\mu(\bar{r}(s))ds+\int_{0}^{t}\bar{X}(s)\sigma(\bar{r}(s))dW(s)\\
&\quad+\int_{0}^{t}\bar{X}(s)g(\bar{r}(s))dN(s).
\end{split}
\end{equation}

 By virtue of Theorem 3.1, it follows that the solutions $y(t)$ and $X(t)$ are close in the
$L^2$-norm, with the rate less than $C\Delta$ when $\Delta\downarrow
0$. To examine this convergence result, we firstly simulate the
trajectory of $y(t)$ and $X(t)$ for sufficient small stepsize
$\Delta>0$, then use simulated value
$\widehat{E}(\displaystyle\sup_{t_k\in [0,T]}|X(t_k)-y(t_k)|^2)$
 at new division points $t_k$ to estimate $E(\displaystyle\sup_{0\leq t\leq T}|X(t)-y(t)|^2)$,
where $\widehat{E}$ denotes sample mean.

 For simulation reason, it is convenient to transform (\ref{eq:gls})
 into following recursion form with $y(t_0)=y_0$,
\begin{equation}\label{eq:glsn}
\begin{split}
y(t_{k+1})&
\approx y(t_k)\textrm{exp}[(t_{k+1}-t_k)\mu(r_{t_k})+(W({t_{k+1}})-W({t_k}))\sigma(r_{t_k})\\
&\quad
-\frac{1}{2}(t_{k+1}-t_k)\sigma^2(r_{t_k})]\times\sum_{j=1}^{N(T)}(1+g(r_{t_{k+1}}))I(t_{k+1}=\tau_j).
\end{split}
\end{equation}
Notice that $y(t_{k+1})$ in (\ref{eq:glsn}) is not the exact value of $y(t)$
at the division points $t_{k+1}$, because $r(s)$ is not
necessarily constant on $[t_k,t_{k+1}]$. However, since
\[P\{r(t_{k+1})=i|r(t_k)=i\}=1+q_{ii}(t_{k+1}-t_k)+o(t_{k+1}-t_k)\rightarrow1,\]
as $\Delta\downarrow0$, for sufficiently small $\Delta$, it is
reasonable to use (\ref{eq:glsn}) as an approximation of the exact solution
of $y(t)$.

For the discrete approximate solution (\ref{eq:gln}) at the division points
$t_{k+1}$, we have
\begin{equation}
\begin{split}
X_{t_{k+1}}&=X_{t_k}+X_{t_k}\mu(r_{t_k})(t_{k+1}-t_{k})+X_{t_k}\sigma(r_{t_k})(W({t_{k+1}})-W({t_k}))\\
&\quad +\sum_{j=1}^{N(T)}X_{t_k}g(r_{t_k})I_{\{t_{k+1}=\tau_j\}}.
\end{split}
\end{equation}

Now we take specific data to computer the above example. Let
$r(t)$ be a right-continuous Markov chain taking values in
$S=\{1,2\}$ with generator
\[Q=\begin{bmatrix}-0.5 & 0.5 \\ 0.5 & -0.5 \end{bmatrix},\]
therefore, the one step transition probability from $r(t_k)$ to
$r(t_{k+1})$ is $e^{(t_{k+1}-t_k)Q}$.

Let $\mu(r(t))$, $\sigma(r(t))$, $g(r(t))$ take values as follows
\begin{eqnarray*}\left\{
\begin{array}{l}
\displaystyle \mu(1)=0.15,\quad \sigma(1)=0.1,\quad g(1)=-0.2;\\[.5cm]
\displaystyle \mu(2)=0.05,\quad \sigma(2)=0.1,\quad g(2)=-0.1.\\
\end{array}
\right.
\end{eqnarray*}

Let the Poisson intensity $\lambda=1$, and choose initial values
$y_0=10$, $r_0=1$, $T$ is fixed at 10. By applying the previously
described procedure, in Figure 1 the trajectory of the approximate
solution $X(t)$ with $\Delta=0.01$ is constructed.

\includegraphics[height=2.6in,width=4.1in]{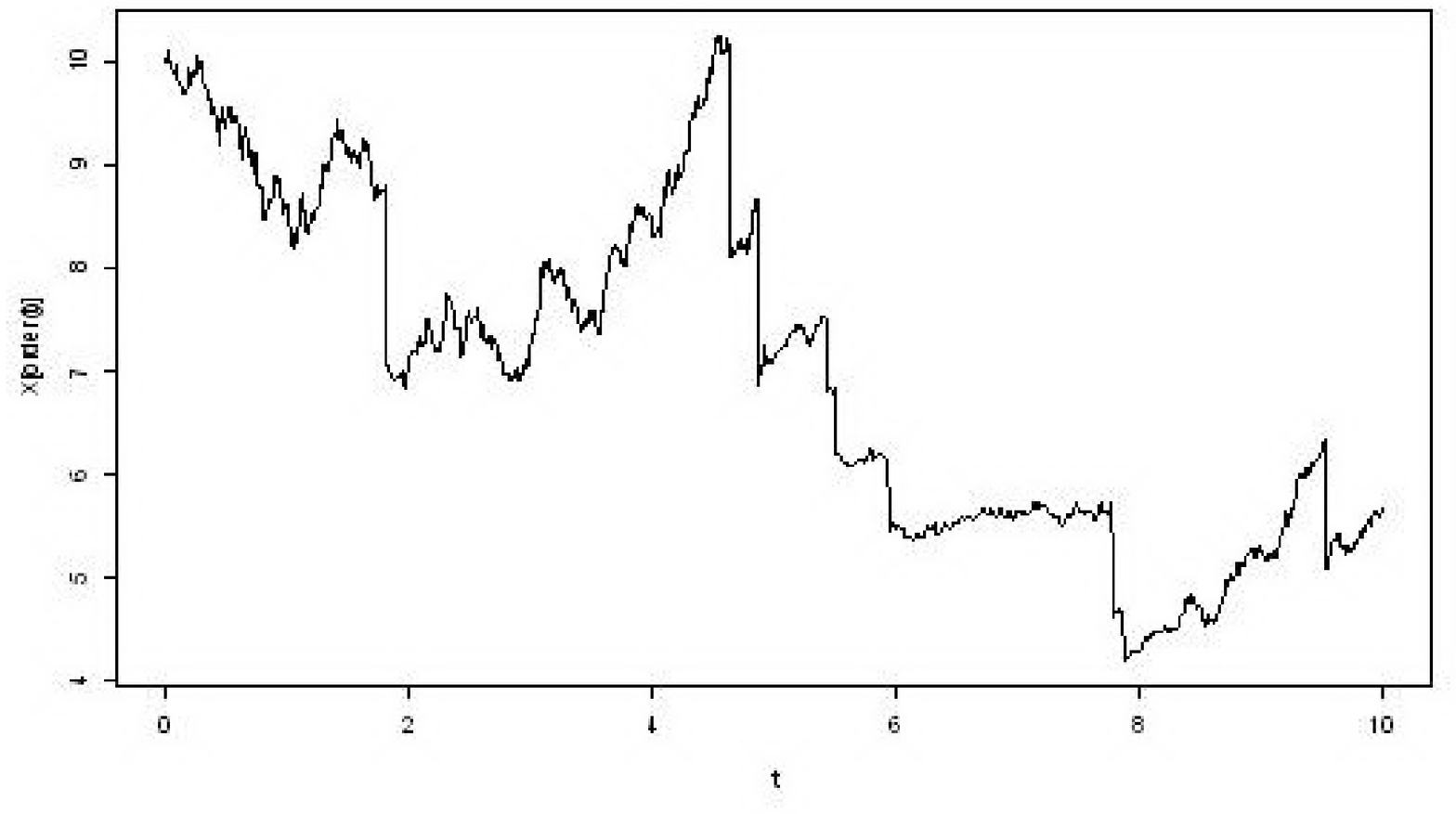}
\begin{center}\scriptsize{\textbf{Figure 1}. The trajectory of the approximate solution when $\Delta=0.01$\quad\quad\quad\quad}\\
\end{center}

To carry out the numerical simulation we successively choose the
stepsize $\Delta$ as the following Table 1, and for each $\Delta$,
we repeatedly simulate and compute
$\displaystyle\sup_{t_k\in[0,10]}(|X(t_k)-y(t_k)|^2)$ for 1000
times, then calculate the sample mean
$\widehat{E}(\displaystyle\sup_{t_k\in [0,10]}|X(t_k)-y(t_k)|^2)$.
The results are listed in the following Table 1.

\begin{tabular}{|c|c|}\hline
$\quad\quad\Delta\quad\quad$ & $\quad\quad\quad\quad\quad\quad\widehat{E}(\displaystyle\sup_{t_k\in[0,10]}|X(t_k)-y(t_k)|^2)\quad\quad\quad\quad\quad\quad$ \\
\hline 0.001 & 0.000179654\\
\hline 0.005 & 0.000908019\\
\hline 0.01 & 0.001793076\\
\hline 0.02 & 0.003626408\\
\hline 0.03 & 0.005427134\\
\hline 0.05 & 0.009155912\\
\hline 0.08 & 0.014817250\\
\hline 0.1 & 0.018204170\\
\hline
\end{tabular}
\begin{center}\scriptsize{\textbf{Table 1}. Estimation of the error between
the numerical and exact solutions\quad\quad\quad\quad}\\
\end{center}

Clearly, the numerical method reveals that the numerical solution
$X(t)$ converges to the exact solution $y(t)$ in $L^2$ as step
size $\Delta\downarrow0$, and the order of convergence is
one-half, i.e.,
\begin{equation*}
E(\sup_{0\leq t\leq T}|X(t)-y(t)|^2)\leq C\Delta+o(\Delta),
\end{equation*}
which strongly demonstrate
our result.\\

\noindent \textsl{Example 4.2}.  In this example, as an
application, we turn our attention to the expected ruin time for a
surplus process with regime switching. Denote by $r(t)$ the
external environment process, which influences the frequency of
claims and the distribution of claims. Suppose that the process
$r(t)$ is a homogeneous, irreducible and recurrent Markov process
with finite state space $S=\{1,2,\cdots,N\}$ with generator
$Q=(q_{ij})_{N\times N}$ and stationary distribution
$\pi=(\pi_1,\pi_2,\cdots,\pi_N)$. $N(t)$ is a Markov-modulated
Poisson process which accounts the arrival of claims, i.e. at time
$t$, claim arrival occur accord to a Poisson process with constant
intensity $\lambda_i$ when $r(t)=i$, and the corresponding claim
size $U_j$ have distribution $F_i(x)$ with means $\mu_i$. The
surplus process is given by
\begin{equation}\label{eq:sp}
Y(t)=u+t-\sum_{n=1}^{N(t)}U_n,
\end{equation}
where the aggregate premium received with rate 1 during interval
$(0,t]$ and $u$ is the initial reserve, the i.i.d. random
variables $U_n$ and $N(t)$ are conditionally independent given
$r(t)$.

Define $T^*=inf\{t>0\mid Y(t)<0\}$ to be the time of ruin of
(\ref{eq:sp}), and define the expected ruin time, given that the initial
environment state is $i$ and the initial reserve is $u$, by
$\xi_{i}(u)=E[T^*\mid r(0)=i,Y(0)=u]$. Let
$\eta=\sum_{i=1}^{N}\pi_i\lambda_i\mu_i$ and $\rho=\eta^{-1}-1$.
It's known that $\rho<0$ ensures $\xi_{i}(u)<\infty$.

Now we consider the special case of two environmental states. Let
\begin{equation*}Q=
\begin{pmatrix}
-q_1 & q_1 \\
q_2 & -q_2
\end{pmatrix},
\end{equation*}
then the stationary distribution of $r(t)$ is $\displaystyle
\pi_1=\frac{q_2}{q_1+q_2}$,
$\displaystyle\pi_2=\frac{q_1}{q_1+q_2}$。 Suppose that claim
sizes $U_1,U_2,\cdots$ are independent and exponentially
distributed with mean $\mu$, which are independent of the
environment process for convenience. Thus according to \cite{24},
we have
\begin{Pro}
If $\rho<0$, then it holds
\begin{equation}\label{eq:two}
\begin{split}
&\xi_{1}(u)=A_1+\frac{u}{\eta-1}+Be^{ku},\\
&\xi_{2}(u)=A_2+\frac{u}{\eta-1}+BD(k)e^{ku},
\end{split}
\end{equation}
where $k$ is the unique negative root of the following equation
\begin{equation}\label{eq:nre}
\begin{split}
P(k)&=k^3+k^2(\rho_1+\rho_2-q_1-q_2)\\
&\quad+k(\rho_1\rho_2-\rho_1q_2-\rho_2q_1-\frac{q_1}{\mu}-\frac{q_2}{\mu})-\frac{\rho_1q_2}{\mu}-\frac{\rho_2q_1}{\mu}\\
&=0,
\end{split}
\end{equation}
with $\displaystyle \rho_i=\frac{1}{\mu}-\lambda_i,\ i=1,2$, and
$(A_1, B, A_2)$ is the unique solution of the following liner
equation
\begin{equation}\label{eq:le}
\begin{pmatrix}
q_1 & 0 & -q_1 \\
1 & 1/(k\mu+1) & 0\\
0 & D(k)/(k\mu+1) & 1
\end{pmatrix}
\begin{pmatrix}
A_1\\
B\\
A_2
\end{pmatrix}
=\frac{1}{\eta-1}
\begin{pmatrix}
\eta-\lambda_1\mu\\
\mu\\
\mu
\end{pmatrix},
\end{equation}
where
\begin{equation}
D(k)=\frac{q_1+k(\mu q_1+\mu\lambda_1-1)-k^2\mu}{q_1+k\mu q_1}.
\end{equation}
\end{Pro}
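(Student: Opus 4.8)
The plan is to characterize $\xi_1,\xi_2$ as the solution of a coupled system of integro-differential equations coming from the infinitesimal generator of the surplus process, and then to solve that system explicitly using the exponential form of the claim distribution. First I would set up the governing equations by a first-step (generator) argument: condition on whether a claim arrives, the environment switches, or neither happens during $[0,dt]$, and let $dt\downarrow0$. Since premiums accrue at rate $1$, claims of size $x$ with density $f(x)=\mu^{-1}e^{-x/\mu}$ arrive at rate $\lambda_i$ in state $i$ and cause ruin when $x>u$ (contributing $0$ to the future ruin time), while switching occurs according to $Q$, one obtains for $i=1,2$ and $j\ne i$
\[
\xi_i'(u)=(\lambda_i+q_i)\xi_i(u)-\lambda_i\int_0^u\xi_i(u-x)f(x)\,dx-q_i\xi_j(u)-1,
\]
together with the requirement $\xi_i(u)<\infty$, which is where the net-profit condition $\rho<0$ will enter.

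Next I would exploit that $f$ is exponential by substituting the ansatz $\xi_1(u)=A_1+cu+Be^{ku}$ and $\xi_2(u)=A_2+cu+BD\,e^{ku}$, evaluating the convolution $\int_0^u\xi_i(u-x)f(x)\,dx$ in closed form. Every term then produces a contribution proportional to $1$, $u$, $e^{ku}$, or a spurious $e^{-u/\mu}$, and matching the coefficients of these four functions in both equations decouples the problem. The $u$-coefficients, once the two constant-term identities are required to be mutually consistent, force $c=1/(\eta-1)$; the $e^{ku}$-coefficient of the first equation gives the eigenvector ratio and reproduces exactly the stated $D(k)$; and requiring the $e^{ku}$-coefficients of both equations to be simultaneously consistent is a single compatibility relation which, after clearing the $(\mu k+1)$ denominators and removing the trivial root $k=0$ (the drift mode), is precisely the cubic $P(k)=0$ of (\ref{eq:nre}). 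Finally, the constant term of the first equation together with the vanishing of the two spurious $e^{-u/\mu}$ coefficients supply three linear relations in $A_1,A_2,B$, which are exactly the three rows of (\ref{eq:le}); I would then check that the remaining constant-term identity is automatically satisfied by the chosen $c$, so the system is determined and not overdetermined.

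The step I expect to be the main obstacle is justifying that all exponential modes except the single negative one must be discarded. Introducing the convolution variables $g_i(u)=\mu^{-1}\int_0^u\xi_i(y)e^{-(u-y)/\mu}\,dy$ turns the problem into a $4\times4$ first-order linear system whose characteristic polynomial factors as $k\,P(k)$: the factor $k$ is responsible for the linear drift term $cu$, while the three roots of $P$ furnish the exponential modes. Finiteness of $\xi_i(u)$ as $u\to\infty$ forces the coefficient of every mode with nonnegative real part to vanish, so only one genuinely decaying exponential may survive. The delicate point is the sign analysis of $P$: one computes $P(0)=(q_1+q_2)(\eta-1)/\mu^2$, which is strictly positive precisely when $\rho<0$, while $P(k)\to-\infty$ as $k\to-\infty$, so a negative root always exists; showing that this negative root is unique and that the two remaining roots are inadmissible is what makes the three-parameter ansatz simultaneously consistent and uniquely solvable. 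Granting this, solving the linear system (\ref{eq:le}) for $(A_1,B,A_2)$ completes the proof.
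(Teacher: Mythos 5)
The paper does not prove this proposition at all: it is quoted verbatim from B\"auerle's 1996 paper on expected ruin times in Markov-modulated risk models (reference \cite{24}), so there is no in-paper argument to compare against. Your derivation is essentially the argument behind the cited result: the first-step/generator equation
$\xi_i'(u)=(\lambda_i+q_i)\xi_i(u)-\lambda_i\int_0^u\xi_i(u-x)f(x)\,dx-q_i\xi_j(u)-1$
is correct, and the bookkeeping you describe does check out. With exponential claims every term is a combination of $1$, $u$, $e^{ku}$ and $e^{-u/\mu}$; the two constant-term identities are jointly consistent exactly when $c=1/(\eta-1)$ (using $\mu(q_2\lambda_1+q_1\lambda_2)=\eta(q_1+q_2)$); the $e^{ku}$-coefficient of the first equation gives $D(k)=\bigl((\lambda_1+q_1-k)(\mu k+1)-\lambda_1\bigr)/(q_1(\mu k+1))$, which simplifies to the stated formula; the compatibility of the two $e^{ku}$-coefficients reads $N_1(k)N_2(k)=q_1q_2(\mu k+1)^2$ with $N_i(k)=(\lambda_i+q_i-k)(\mu k+1)-\lambda_i$, and this is identically $\mu^2 kP(k)=0$ with exactly the cubic of (\ref{eq:nre}); and the three rows of (\ref{eq:le}) are precisely the constant-term identity of the first equation and the two vanishing $e^{-u/\mu}$ coefficients. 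Your value $P(0)=(q_1+q_2)(\eta-1)/\mu^2$ is also correct, so $\rho<0$ does guarantee a negative root.

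Two points remain genuinely open in your write-up, and you flag the second yourself. First, the ansatz-matching only shows that the stated functions solve the integro-differential system; to conclude they equal $\xi_i$ you need (i) that $\xi_i$ is finite and differentiable and actually satisfies that system (finiteness under $\rho<0$ is asserted in the paper but not proved), and (ii) a uniqueness argument within an admissible class. Second, your mode-exclusion step is not quite right as stated: $\xi_i(u)$ is \emph{not} bounded in $u$ (it grows like $u/(\eta-1)$), so ``finiteness as $u\to\infty$'' cannot be the criterion; what you need is an a priori at-most-linear growth bound on $\xi_i$ (obtainable, e.g., from a Wald-type comparison with the drift), which then kills the modes of $P$ with positive real part, together with a root-count showing $P$ has exactly one negative root and two roots that are inadmissible. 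Neither of these is hard for this model, but as written they are placeholders rather than proofs; since the paper itself simply defers to \cite{24}, supplying them (or citing B\"auerle's Theorem for the two-state exponential case) is what would make your argument complete.
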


From Proposition 4.1, it is easily to calculate $\xi_{1}(u)$ and
$\xi_{2}(u)$ if the appropriate values of parameters in the model
are given. However, in this paper we will use the numerical scheme
described in Section 2 to generate the simulating paths and then
derive the simulated value of $\xi_{1}(u)$ and $\xi_{2}(u)$.
Generally speaking, this simulation approach is feasible and
efficient for obtaining numerical solutions to problems which are
too complicated to solve analytically. Here we take for calculating
$\xi_{1}(u)$ as an example.

Let
\begin{equation}
Q=
\begin{pmatrix}
-1 & 1 \\
1 & -1
\end{pmatrix},
\end{equation}
and $\lambda_1=1$, $\lambda_2=2$, $\mu=1$, then it is easy to
obtain that $\displaystyle\pi=(\frac{1}{2},\frac{1}{2})$,
$\displaystyle\eta=\frac{3}{2}$,
$\displaystyle\rho=-\frac{1}{3}<0$, $\rho_1=0$, $\rho_2=-1$.
Solving equations (\ref{eq:nre}) and (\ref{eq:le}) respectively, we get
$k=-0.6751309$, $A_1=2.712742$, $B=1.481194$. Substituting into
(4.7) gives the exact value of $\xi_{1}(u)$
\begin{equation}\label{eq:ev}
\xi_{1}(u)=2.712742+2u+1.481194e^{-0.6751309u}.
\end{equation}

For comparing the exact value (\ref{eq:ev}) with the simulated value of
$\xi_{1}(u)$, we adopt the numerical algorithm in Section 2. After
simulating the trajectory of $Y(t)$, which is also denoted by
$X(t)$, it is easy to explore the trajectory behavior on $[0,T]$.
To avoid unnecessary error, we choose a sufficient large interval
$[0,T]$ and record the ruin time $T^*$ of $X(t)$ for every
trajectory. Set $T^*=T$ if $T^*>T$. In the following, let the
stepsize $\Delta=0.01$ and $T=100$, for each initial reserve
$u=5,8,10,15,20$, we repeatedly simulate and compute $T^*$ for
1000 times respectively, then calculate the sample mean
$\bar{\xi_{1}}(u)$. The results are listed in the following Table 2.\\

\begin{tabular}{|c|c|c|}\hline
$\quad\quad\quad u \quad\quad\quad$ & $\quad\quad\quad\quad \xi_{1}(u)\quad\quad\quad\quad$ & $\quad\quad\quad\quad\bar{\xi_{1}}(u)\quad\quad\quad\quad$\\
\hline 5 & 12.76339 & 12.4102\\
\hline 8 & 18.71942 & 18.7526\\
\hline 10 & 22.71447 & 23.1208\\
\hline 15 & 32.71280 & 32.5327\\
\hline 20 & 42.71274 & 41.9659\\
\hline
\end{tabular}
\begin{center}\scriptsize{\textbf{Table 2}. Exact and simulated value of the expected ruin time for initial reserve $u$}\\
\end{center}

Clearly, the simulated value $\bar{\xi_{1}}(u)$ is considerably
close to $\xi_{1}(u)$, which again demonstrate the efficiency of
our numerical method.

\end{document}